\documentclass[11pt]{article}

\usepackage[a4paper,margin=1in]{geometry}
\usepackage[T1]{fontenc}
\usepackage[utf8]{inputenc}
\usepackage{lmodern}
\usepackage{amsmath,amssymb,amsthm,mathtools}
\usepackage{booktabs,array,longtable}
\usepackage{graphicx}
\usepackage{hyperref}

\newtheorem{theorem}{Theorem}
\newtheorem{lemma}{Lemma}
\newtheorem{proposition}{Proposition}
\newtheorem{remark}{Remark}

\DeclareMathOperator{\Res}{Res}

\newcommand{\Q}{\mathbb Q}

\newcommand{\Fone}{F_1}
\newcommand{\Ftwo}{F_2}

\title{Class-Number-One Cubic Fricke Companions on \(\Gamma_0(2)^+\) and an Exact Chudnovsky Bridge}
\author{Vedran Menđušić}
\date{}

\begin{document}
\maketitle

\begin{abstract}
A class-number-one classification is proved for the Fricke-invariant
level-two correspondence attached to \(\Gamma_0(2)^+\).  Among the
fundamental Heegner discriminants with \(h(D)=1\), excluding the
coordinate-degenerate case \(D=-3\), the Fricke-cusp companion is
irreducibly cubic over \(\mathbb Q\) exactly for
\[
        D=-11,-19,-43,-67,-163.
\]
The proof combines an explicit bridge polynomial between the level-one
parameter \(z=1728/j\) and the Fricke parameter
\(X=256t/(t+64)^2\), a hypergeometric gauge identity on the common
\(t\)-line, and the CM interpretation of the fibre through the splitting
of the prime \(2\).  The previously isolated \(D=-163\) cubic
Ramanujan--Sato formula appears here only as the member of this finite
class-number-one family with the strongest geometric contraction parameter.  The new contribution is the exact
bridge-and-classification framework: its linear form is shown to
transport exactly to the normalized Chudnovsky linear form.  The paper is
structural: the comparison of convergence parameters is not a claim of
superior bit-complexity over the classical Chudnovsky algorithm.
\end{abstract}

\section{Introduction}

Ramanujan--Sato series for \(1/\pi\) are organized by modular
parametrizations and complex multiplication.  For related modular-polynomial
proof methods for Ramanujan-type series, see Guillera~\cite{GuilleraFastestRamanujan};
for broader genus-zero classification frameworks, see Huber--Schultz--Ye~\cite{HuberSchultzYe}
and Campbell--Cooper--Ye~\cite{CampbellCooperYe}.  The classical Chudnovsky
series is a level-one specialization attached to the Heegner
discriminant \(D=-163\), where
\[
        j\!\left(\frac{1+\sqrt{-163}}{2}\right)=-640320^3.
\]
A previous note by the present author isolated an explicit cubic
\(\Gamma_0(2)^+\) companion at this discriminant~\cite{MendjusicD163}.
The present paper has a different emphasis: it explains that example as
part of the complete class-number-one picture on the Fricke quotient
\(\Gamma_0(2)^+\).

The main object is the level-two Hauptmodul
\[
        t(\tau)=\left(\frac{\eta(\tau)}{\eta(2\tau)}\right)^{24},
\]
together with
\[
        j=\frac{(t+256)^3}{t^2},
        \qquad
        X=\frac{256t}{(t+64)^2},
        \qquad
        z=\frac{1728}{j}.
\]
Eliminating \(t\) gives a cubic correspondence \(\Phi(z,X)=0\) between
the level-one parameter and the Fricke-invariant parameter.  The
Fricke-cusp branch of this correspondence is the branch for which
\(X/z\to4/27\) at the cusp.

The purpose of the paper is not to propose a faster numerical algorithm
for \(\pi\).  Rather, it makes explicit a structural phenomenon: a
rational level-one CM parameter, including the Chudnovsky parameter at
\(D=-163\), can lift through the level-two Fricke correspondence to an
irreducible cubic parameter.  The exact cases in which this happens among
the fundamental class-number-one discriminants are identified, and an exact
coefficient bridge between the level-one Chudnovsky form and the
corresponding cubic Fricke companion.

\paragraph{Scope.}
The classification theorem in this note is finite and exact: it concerns
only fundamental Heegner discriminants with class number one.  Statements
about higher class number fields, nonfundamental orders, or the full
CM-order classification are included only as remarks and are not used in
the proof of the class-number-one theorem.

\paragraph{Relation with the earlier \(D=-163\) note.}
In the previous \(D=-163\) note~\cite{MendjusicD163}, the explicit
\(D=-163\) cubic Ramanujan--Sato series on \(\Gamma_0(2)^+\) was constructed.  The present
paper is meant to be a separate classification-and-bridge note.  Its main
results are not the re-statement of the single \(D=-163\) formula, but the
exact Chudnovsky--Fricke linear-form bridge, the finite
class-number-one cubic companion classification, and the accompanying
exact algebraic certificates.  The relationship is summarized as follows.
\begin{center}
\small
\begin{tabular}{p{0.48\textwidth}p{0.18\textwidth}p{0.18\textwidth}}
\toprule
component & earlier note & present paper\\
\midrule
explicit \(D=-163\) cubic series & yes & one case\\
level-two Fricke parameter and cubic data & yes & retained\\
bridge polynomial \(\Phi(z,X)\) & auxiliary & central\\
exact Chudnovsky--Fricke linear-form bridge & not main & proved\\
classification of cubic companions & no & yes\\
discriminant-resolvent certificates & not central & included\\
\bottomrule
\end{tabular}
\end{center}
The discriminants in the classification are displayed in the main theorem
below.  Thus the role of \(D=-163\) here is illustrative and extremal:
it is the member of the finite class-number-one cubic family with the
smallest value of \(|\xi_D|\), and it is the case where the bridge
recovers the classical Chudnovsky constants.

\paragraph{Computational scope.}
The comparison of parameters made below is a comparison of geometric
contraction per summand.  It is not a bit-complexity claim and it is not
intended as a practical replacement for the classical Chudnovsky
algorithm.  Although the cubic Fricke parameter at \(D=-163\) has
slightly stronger geometric contraction than the level-one Chudnovsky
parameter, the coefficients lie in a cubic algebraic field.  A direct
implementation therefore involves algebraic-field or high-precision ball
arithmetic rather than the highly optimized rational binary-splitting
arithmetic available for the Chudnovsky series.  The contribution here is
modular and structural, not an assertion of superior practical

performance.

\paragraph{Main theorem, in condensed form.}
The results proved below may be summarized as follows.  There is an exact
modular bridge on the common \(t\)-line between the level-one Chudnovsky
hypergeometric parameter and the Fricke parameter on \(\Gamma_0(2)^+\):
\[
        \Phi(z(t),X(t))=0,
        \qquad
        \Fone(z(t))=\rho(t)\Ftwo(X(t)),
        \qquad
        \rho(t)^2=\frac{t+256}{t+64}.
\]
Under this bridge, the classical \(D=-163\) Chudnovsky linear form
transports exactly to an irreducibly cubic Fricke companion.  Moreover,
among the fundamental class-number-one discriminants, the irreducibly
cubic Fricke-cusp companions occur precisely for
\[
        D=-11,-19,-43,-67,-163.
\]
For each of these five cubic companions the quadratic resolvent field is
\(\mathbb Q(\sqrt D)\), equivalently
\(\operatorname{disc}(P_D^{\rm mon})/D\in(\mathbb Q^\times)^2\).
This is the main theorem of the paper; the later sections give the
explicit bridge, the transported \(D=-163\) linear form, the CM
classification, and the exact certificates.

\section{The level-one and Fricke parameters}

Let
\[
        t(\tau)=\left(\frac{\eta(\tau)}{\eta(2\tau)}\right)^{24}.
\]
The standard relation between \(t\) and the absolute modular invariant is
\[
        j=\frac{(t+256)^3}{t^2}.
\]
For the Fricke-invariant branch on \(\Gamma_0(2)^+\), put
\[
        X=\frac{256t}{(t+64)^2}.
\]
The level-one Chudnovsky parameter is also used
\[
        z=\frac{1728}{j}.
\]
Thus
\[
        z=\frac{1728t^2}{(t+256)^3}.
\]

\section{The cubic bridge polynomial}

\begin{lemma}[Fricke bridge polynomial]
Let
\[
        z=\frac{1728}{j},
        \qquad
        j=\frac{(t+256)^3}{t^2},
        \qquad
        X=\frac{256t}{(t+64)^2}.
\]
Then \(z\) and \(X\) satisfy
\[
\begin{aligned}
0=\Phi(z,X):={}&729z^2X^3+3888z^2X^2+6912z^2X+4096z^2\\
&-1458zX^3+22356zX^2-27648zX+729X^3.
\end{aligned}
\]
\end{lemma}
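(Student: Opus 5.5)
The plan is to verify the identity directly on the common \(t\)-line, but organized so that the computation stays small. I will view \(\Phi\) as a quadratic in \(z\) with coefficients in \(\Q[X]\):
\[
\Phi(z,X)=A(X)z^2+B(X)z+C(X),
\]
where
\[
A=729X^3+3888X^2+6912X+4096=(9X+16)^3,\qquad B=-1458X^3+22356X^2-27648X,\qquad C=729X^3 .
\]
The factorization of \(A\) is immediate from the binomial expansion, since \(3\cdot 81\cdot 16=3888\) and \(3\cdot 9\cdot 256=6912\). The key structural observation is that \(X=256t/(t+64)^2\) is invariant under the Fricke substitution \(t\mapsto 4096/t\). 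This holds because \((4096/t+64)^2=4096(t+64)^2/t^2\). So the natural candidates for the two roots in \(z\) are \(z_1=z(t)=1728t^2/(t+256)^3\) and its Fricke conjugate \(z_2=z(4096/t)\).

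\textbf{Step 1.} Compute
\[
z_2=\frac{1728\cdot 4096^2/t^2}{(4096/t+256)^3}=\frac{1728\cdot 4096^2\,t}{256^3(16+t)^3}=\frac{1728\cdot 16\,t}{(t+16)^3}.
\]

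\textbf{Step 2.} Verify Vieta's relations, namely
\[
z_1z_2=\frac{729X^3}{(9X+16)^3},\qquad z_1+z_2=-\frac{B(X)}{(9X+16)^3}.
\]
For the product, note that \(9X+16=16(t^2+272t+4096)/(t+64)^2\), and that \((t+256)(t+16)=t^2+272t+4096\). Then
\[
z_1z_2=\frac{1728^2\cdot 16\,t^3}{(t^2+272t+4096)^3}.
\]
On the other side,
\[
\frac{729\cdot 256^3t^3}{16^3(t^2+272t+4096)^3}.
\]
Since \(1728^2\cdot16=729\cdot 2^{16}\cdot 16=729\cdot 2^{24}/2^{4}\cdot 16\), the two expressions agree. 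For the sum, put \(z_1+z_2\) over the common denominator \((t^2+272t+4096)^3\). Its numerator is \(1728t\bigl(t(t+16)^3+16(t+256)^3\bigr)\), a degree-four polynomial in \(t\) times \(t\). This should match \(-B(X)(t+64)^6/16^3\) after expanding \(B\) in \(t\). Both sides are then explicit polynomials of degree at most \(5\) in \(t\), and I will compare their coefficients.

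\textbf{Step 3.} Since \(A(X)\,(z-z_1)(z-z_2)=\Phi(z,X)\) as polynomials in \(z\) once Vieta holds, setting \(z=z_1\) gives \(\Phi(z(t),X(t))=0\) identically.

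I expect Step 2's sum identity to be the only real work; it is a finite coefficient comparison. As a fallback, one can instead clear the denominator \((t+256)^6(t+64)^6\) in \(\Phi(z(t),X(t))\) and check that the resulting polynomial in \(t\), of degree at most \(18\), vanishes. Equivalently, one can compute \(\Res_t\) of \(z(t+256)^3-1728t^2\) and \(X(t+64)^2-256t\), and check that it equals \(\Phi\) up to a constant factor and spurious factors.
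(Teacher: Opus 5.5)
Your Vieta route works and differs from the paper's, but Steps 1 and 2 as written contain an arithmetic error, and the check would fail. Since \(4096^2=256^3=2^{24}\), the Fricke conjugate is
\[
z_2=z(4096/t)=\frac{1728\,t}{(t+16)^3},
\]
not \(1728\cdot 16\,t/(t+16)^3\). This is just \(1728/j(2\tau)\), matching the classical \(j(2\tau)=(t+16)^3/t\). Your product check hides the slip behind a second one: \(1728^2=12^6=729\cdot 2^{12}\), not \(729\cdot 2^{16}\). So with your \(z_2\), the product \(z_1z_2\) is \(16\) times \(C/A\). Likewise your sum numerator \(t(t+16)^3+16(t+256)^3\) will not match \(-B(X)(t+64)^6/16^3\).

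With the corrected \(z_2\) everything closes. The product is \(z_1z_2=729\cdot 4096\,t^3/(t^2+272t+4096)^3=C/A\). For the sum, \(-B(X)(t+64)^6/16^3\) expands to \(1728t\,(t^4+49t^3+1536t^2+200704t+16777216)\), which is exactly \(1728t\bigl(t(t+16)^3+(t+256)^3\bigr)\). Step 3 then goes through and gives \(\Phi(z,X(t))=(9X(t)+16)^3(z-z_1)(z-z_2)\) in \(\Q(t)[z]\).

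The paper instead computes, in one CAS step, \(\Res_t\bigl(z(t+256)^3-1728t^2,\;X(t+64)^2-256t\bigr)=2^{36}\Phi(z,X)\). It concludes because a common root in \(t\) forces the resultant to vanish. That is shorter, and it also shows that \(\Phi\) is, up to the constant \(2^{36}\), the entire eliminant with no extraneous factors. Its degrees \(2\) in \(z\) and \(3\) in \(X\) match \(\deg X(t)=2\) and \(\deg z(t)=3\). But it is a black-box computation that rests on the ancillary script.

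Your argument can be checked by hand and explains the structure. Over a fixed \(X\), the two \(z\)-roots are \(1728/j(\tau)\) and \(1728/j(2\tau)\), exchanged by the Fricke involution \(t\mapsto 4096/t\); this is why \(\Phi\) is quadratic in \(z\). It also shows that the leading coefficient \((9X+16)^3\) vanishes only at \(X=-16/9\), i.e.\ \(t\in\{-256,-16\}\). There one of \(j(\tau),j(2\tau)\) is \(0\), which is the degenerate point the paper meets at \(D=-3\).
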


\begin{proof}
The defining equations are
\[
        z(t+256)^3-1728t^2=0
\]
and
\[
        X(t+64)^2-256t=0.
\]
A direct resultant computation gives
\[
\Res_t\!\left(
z(t+256)^3-1728t^2,\,
X(t+64)^2-256t
\right)
=
2^{36}\Phi(z,X).
\]
Hence every common value of \(t\) gives a point on \(\Phi(z,X)=0\).
\end{proof}

The Fricke-cusp branch is characterized by
\[
        \frac{X}{z}\longrightarrow \frac4{27}
        \qquad (z\to0),
\]
because, as \(t\to\infty\),
\[
        z=\frac{1728}{t}+O(t^{-2}),
        \qquad
        X=\frac{256}{t}+O(t^{-2}).
\]

\begin{remark}[Real Fricke-cusp branch]
For the five irreducible cubic class-number-one cases considered in the
main theorem, the CM point on the principal branch has
\(q=e^{2\pi i\tau_D}=-e^{-\pi\sqrt{|D|}}<0\).  The corresponding
Fricke-cusp value of \(t\) is real and lies on the interval
\(( -\infty,-512)\).  This interval contains none of the singular points
\(0,\pm64,-256,512\) of the pulled-back equations.  Hence the cusp germ
at \(t=\infty\) continues to the selected CM value along the real branch
without crossing a singularity.  On this branch
\[
        X(t)<0,
        \qquad
        1-X(t)>1,
\]
and the real positive square root is used
\[
        \sqrt{1-X(t)}=\frac{t-64}{t+64}>0.
\]
This is the branch convention used in the signed linear forms below.
\end{remark}

\section{Specialization at \texorpdfstring{\(D=-163\)}{D=-163}}

Let
\[
        \tau_{163}=\frac{1+\sqrt{-163}}{2}.
\]
Then
\[
        j(\tau_{163})=-640320^3,
\]
and therefore
\[
        z=\frac{1728}{j(\tau_{163})}
        =
        -\frac{1}{53360^3}
        =
        -\frac{1}{151931373056000}.
\]
Substituting this value into \(\Phi(z,X)=0\), and clearing denominators
primitively, gives
\[
\begin{aligned}
0={}&
16827610604518993301932059648729X^3\\
&-3396577776039932112X^2\\
&+4200598602252294912X+4096.
\end{aligned}
\]
The Fricke-cusp branch is the small real root
\[
        \xi\approx
        -9.7509911987395938485584703467480\cdot 10^{-16}.
\]

\section{The hypergeometric kernel bridge}

Define
\[
        \Fone(z)
        =
        {}_3F_2\!\left(\frac16,\frac12,\frac56;1,1;z\right)
\]
and
\[
        \Ftwo(X)
        =
        {}_3F_2\!\left(\frac14,\frac12,\frac34;1,1;X\right).
\]
The normalization used here is
\[
        \Ftwo(X)=
        \sum_{n=0}^{\infty}
        \frac{(4n)!}{256^n(n!)^4}X^n.
\]
In the Campbell--Cooper--Ye normalization, the corresponding variable
is usually denoted by \(x\), with
\[
        \sum_{n\ge0}\binom{4n}{2n}\binom{2n}{n}^2x^n
        =
        {}_3F_2\!\left(\frac14,\frac12,\frac34;1,1;256x\right).
\]
Thus throughout this note
\[
        X_{\rm ours}=256x_{\rm CCY}.
\]

\begin{lemma}[Hypergeometric bridge]
Let
\[
        z(t)=\frac{1728t^2}{(t+256)^3},
        \qquad
        X(t)=\frac{256t}{(t+64)^2},
        \qquad
        \rho(t)^2=\frac{t+256}{t+64}.
\]
Choose the branch of \(\rho\) for which \(\rho\to1\) as \(t\to\infty\).
Then, as germs at the cusp \(t=\infty\),
\[
        \Fone(z(t))=\rho(t)\Ftwo(X(t)).
\]
The identity extends by analytic continuation to any simply connected
region avoiding the singular values of the pullbacks.  In particular it
holds on the real Fricke-cusp branch containing the CM points used below.
\end{lemma}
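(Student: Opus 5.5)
We prove the identity by showing that both sides, viewed as functions of the local parameter \(s=1/t\) at the cusp, satisfy the same third-order linear ODE and have the same initial terms. The convenient route passes through the classical Clausen factorizations
\[
        \Fone(z)={}_2F_1\!\left(\tfrac1{12},\tfrac5{12};1;z\right)^2,
        \qquad
        \Ftwo(X)={}_2F_1\!\left(\tfrac18,\tfrac38;1;X\right)^2 .
\]
Here \(\Fone(z)\) is the classical Clausen form of \(E_4(\tau)^{1/2}\) in \(z=1728/j\). Near \(s=0\) both \(z(t)\) and \(X(t)\) tend to \(0\), so every series converges. It therefore suffices to prove the square-root identity
\[
        {}_2F_1\!\left(\tfrac1{12},\tfrac5{12};1;z(t)\right)
        =\rho(t)^{1/2}\,{}_2F_1\!\left(\tfrac18,\tfrac38;1;X(t)\right)
\]
with the branches normalized to \(1\) at \(t=\infty\). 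Squaring this identity gives the lemma.

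The plan for the square-root identity has two steps.

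First, verify the algebraic transformation. The map \(t\mapsto z\) has degree \(3\) and the map \(t\mapsto X\) has degree \(2\), and both factor through the common \(t\)-line.
\begin{itemize}
\item Pull back Gauss's equation for \((1/12,5/12;1)\) along \(z(t)\). The local exponents are \(0,1/3\) at \(z=0\), \(0,1/2\) at \(z=1\), and \(1/12,5/12\) at \(z=\infty\). Since \(z=1728t^2/(t+256)^3\), the point \(z=\infty\) lies over \(t=-256\) with ramification \(3\), and \(z=0\) lies over \(t=0\) (ramification \(2\)) and \(t=\infty\).
\item Do the same for \((1/8,3/8;1)\) along \(X(t)\). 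Here \(X=\infty\) lies over \(t=-64\) with ramification \(2\), and \(X=1\) lies over \(t=64\).
\item Compare the two pulled-back second-order equations on the \(t\)-line. Computing their Riemann schemes should show that, after the gauge factor \(\rho^{1/2}=((t+256)/(t+64))^{1/4}\), they have the same singular points \(0,\pm64,-256,\infty\) and the same exponent differences.
\item Conclude that the gauged equations coincide. A clean way is to compute the normal-form (Schwarzian) potentials \(Q(t)\) of both pullbacks, shift by the gauge, and check that the two rational functions agree.
\end{itemize}

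Second, fix the solution. At \(t=\infty\) both sides are analytic in \(s\), and each equals \(1+O(s)\). The other local solution at the cusp carries a logarithm, since both hypergeometric equations have exponents \(0,0\) at the origin. The equality of ODEs together with equal leading terms therefore forces equality of the germs. As a sanity check I would compare the first few coefficients: the \(s^1\) terms are \(\tfrac{5}{144}\cdot1728=60\) on the left and \(\tfrac{3}{64}\cdot256+\tfrac14(256-64)=12+48=60\) on the right.

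Continuation then follows from uniqueness for linear ODEs on a simply connected domain avoiding \(t\in\{0,\pm64,-256\}\), and \(512\) where the branch choice of the cubic enters. In particular the identity holds along the real interval \((-\infty,-512)\) of the preceding Remark.

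The main obstacle is the first step, specifically confirming that the gauged pullbacks agree exactly rather than merely having matching exponents. Matching exponents alone does not suffice: with five singular points there are two accessory parameters. So I would first try the direct potential computation. If that is too messy, a fallback is modular. One could identify \({}_2F_1(\tfrac1{12},\tfrac5{12};1;z)^4=E_4(\tau)\) and \({}_2F_1(\tfrac18,\tfrac38;1;X)^4\) with the weight-2 form \((2E_2(2\tau)-E_2(\tau))^2\) on \(\Gamma_0(2)^+\). Then check the eta-quotient identity
\[
        E_4(\tau)=\frac{t+256}{t+64}\,\bigl(2E_2(2\tau)-E_2(\tau)\bigr)^2
\]
by comparing finitely many \(q\)-coefficients, using the Sturm bound.
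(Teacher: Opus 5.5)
Your proposal is correct and follows the paper's proof essentially step for step. Clausen's identity reduces the claim to the ${}_2F_1$ identity with gauge factor $\bigl((t+256)/(t+64)\bigr)^{1/4}$; both Gauss equations are pulled back to the $t$-line and shown to have the same normal-form potential (the paper records $Q_1=Q_2=(t^2+80t+4096)/\bigl(4t^2(t+64)^2\bigr)$ together with $P_2-P_1=2g'/g$); the solution is then pinned down because the cusp-normalized holomorphic solution at $t=\infty$ is unique, the other local solution being logarithmic; and your modular fallback appears in the paper only as a cross-check remark.

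Two harmless slips. First, the $(\tfrac1{12},\tfrac5{12};1)$ equation has exponents $0,0$ at $z=0$, not $0,\tfrac13$; the difference $\tfrac13$ sits at $z=\infty$. Second, in normal form the only singular points are $t=0,-64,\infty$ ($t=64$ is ordinary and $t=-256$ is removable), so your two-accessory-parameter worry does not arise, although the direct potential computation you plan is exactly the paper's route.
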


\begin{proof}
An explicit ODE-transport certificate is used on the common \(t\)-line.
Let
\[
        f_1(z)={}_2F_1\!\left(\frac1{12},\frac5{12};1;z\right),
        \qquad
        f_2(X)={}_2F_1\!\left(\frac18,\frac38;1;X\right).
\]
Clausen's identity gives
\[
        \Fone=f_1^2,
        \qquad
        \Ftwo=f_2^2.
\]
It is therefore enough to prove
\[
        f_1(z(t))=
        \left(\frac{t+256}{t+64}\right)^{1/4} f_2(X(t)),
\]
with the fourth-root branch tending to \(1\) at the cusp.

Pull back the Gauss hypergeometric equations for \(f_1\) and \(f_2\) to
the \(t\)-line.  In normalized form write them as
\[
        y''+P_i(t)y'+R_i(t)y=0\qquad (i=1,2),
\]
where \(i=1\) corresponds to \(f_1(z(t))\) and \(i=2\) to
\(f_2(X(t))\).  Exact simplification gives
\[
        P_1(t)=\frac{t^2+416t+16384}{t(t+64)(t+256)},
        \qquad
        P_2(t)=\frac1t .
\]
For
\[
        g(t)=\left(\frac{t+256}{t+64}\right)^{1/4}
\]
one has
\[
        \frac{g'}g=-\frac{48}{(t+64)(t+256)}
\]
and hence
\[
        P_2-P_1-2\frac{g'}g=0.
\]
Moreover the transformed zeroth-order coefficient also agrees:
\[
        R_1-\bigg(R_2-P_2\frac{g'}g+\bigg(\frac{g'}g\bigg)^2
        -\bigg(\frac{g'}g\bigg)'\bigg)=0.
\]
Equivalently, the two normal-form potentials are the same rational
function,
\[
        Q_1(t)=Q_2(t)=
        \frac{t^2+80t+4096}{4t^2(t+64)^2}.
\]
Thus the pullback of the equation for \(f_1\) is the gauge transform by
\(g\) of the pullback of the equation for \(f_2\).  At the cusp
\(t=\infty\) the common pulled-back equation has a regular singular point
with a one-dimensional cusp-normalized holomorphic solution space.  Both
sides are holomorphic there and are normalized to have value \(1\).  Hence
the normalized holomorphic germs agree, so
\[
        f_1(z(t))=g(t)f_2(X(t)).
\]
Squaring and using Clausen's identity gives
\[
        \Fone(z(t))=\rho(t)\Ftwo(X(t)),
        \qquad
        \rho(t)^2=\frac{t+256}{t+64}.
\]
\end{proof}

\begin{remark}[Modular cross-check]
The same bridge is also compatible with the classical level-two modular
identifications.  On the cusp-normalized branches,
\[
        f_1(z(\tau))^4=E_4(\tau),
        \qquad
        f_2(X(\tau))^4=G(\tau)^2,
        \qquad
        G(\tau)=2E_2(2\tau)-E_2(\tau).
\]
Here
\[
        G(\tau)^2=
        \eta(\tau)^{16}\eta(2\tau)^{-8}
        +64\eta(\tau)^{-8}\eta(2\tau)^{16},
\]
and
\[
        E_4(\tau)=
        \eta(\tau)^{16}\eta(2\tau)^{-8}
        +256\eta(\tau)^{-8}\eta(2\tau)^{16}.
\]
Together with
\[
        \frac{E_4(\tau)}{G(\tau)^2}=\frac{t+256}{t+64},
\]
these identities give the same bridge.  The source package includes a
numerical modular check at \(\tau=1.3i\), while the proof above uses only
the exact rational ODE certificate.
\end{remark}

\section{General \texorpdfstring{\(\Gamma_0(2)^+\)}{Gamma0(2)+} Fricke linear form}

Let
\[
        F(X)=\Ftwo(X),
        \qquad
        G(\tau)=2E_2(2\tau)-E_2(\tau),
        \qquad
        X=X(\tau)=\frac{256t}{(t+64)^2}.
\]
On the Fricke-cusp branch set
\[
        s=\sqrt{1-X}=\frac{t-64}{t+64}.
\]
For a CM point \(\tau\in\mathfrak H\), define
\[
\alpha_\tau=
\frac{1}{12}\left(
4\frac{E_4(2\tau)}{G(\tau)^2}
-\frac{E_4(\tau)}{G(\tau)^2}
-1
-2\frac{E_2^*(\tau)}{G(\tau)}
\right),
\]
where
\[
        E_2^*(\tau)=E_2(\tau)-\frac{3}{\pi\operatorname{Im}\tau}.
\]
Then
\[
        \alpha_\tau F(X(\tau))
        +s\,\theta_XF(X(\tau))
        =\frac{1}{2\pi\operatorname{Im}\tau}.
\]
Equivalently, if
\[
        \tau=\frac{-b+\sqrt D}{2a},
        \qquad D<0,
\]
then \(\operatorname{Im}\tau=\sqrt{|D|}/(2a)\), and
\[
\boxed{
\frac1\pi
=
\frac{\sqrt{|D|}}{a}
\left[
\alpha_\tau F(X(\tau))
+s\,\theta_XF(X(\tau))
\right].
}
\]
In series form this is
\[
\boxed{
\frac1\pi
=
\frac{\sqrt{|D|}}{a}
\sum_{n=0}^{\infty}
\frac{(4n)!}{256^n(n!)^4}
X(\tau)^n
\left(\alpha_\tau+s n\right).
}
\]

\begin{proof}
The standard level-two Fricke parametrization identities are used
\[
        F(X(\tau))=G(\tau),
        \qquad
        q\frac{d}{dq}\log X=G(\tau)\sqrt{1-X}.
\]
They follow directly from the eta-product expressions for \(t\), \(X\),
and \(G=2E_2(2\tau)-E_2(\tau)\), and are also standard in the
level-two Ramanujan--Sato literature; see, for example,
Borwein--Borwein~\cite{BorweinBorweinAGM} and
Huber--Schultz--Ye~\cite{HuberSchultzYe}.  These two identities give
\[
        q\frac{dG/dq}{G}
        =\sqrt{1-X}\,\theta_XF(X).
\]
Ramanujan's differential relations give
\[
q\frac{dG/dq}{G}
=\frac{1}{12}\left(
G+2E_2+\frac{E_4}{G}-4\frac{E_4(2\tau)}{G}
\right).
\]
By the definition of \(\alpha_\tau\),
\[
\alpha_\tau G
=\frac{1}{12}\left(
4\frac{E_4(2\tau)}{G}-\frac{E_4}{G}-G-2E_2^*
\right).
\]
Adding the last two displayed identities gives
\[
        \alpha_\tau F(X)+\sqrt{1-X}\,\theta_XF(X)
        =\frac{E_2-E_2^*}{6}
        =\frac{1}{2\pi\operatorname{Im}\tau}.
\]
The final form follows from \(\operatorname{Im}\tau=\sqrt{|D|}/(2a)\).
\end{proof}

\section{Derivative dictionary}

Let
\[
        \theta_z=z\frac{d}{dz},
        \qquad
        \theta_X=X\frac{d}{dX},
        \qquad
        \theta_t=t\frac{d}{dt}.
\]
From
\[
        z=\frac{1728t^2}{(t+256)^3},
\]
one obtains
\[
        \theta_z
        =
        \frac{t+256}{512-t}\theta_t.
\]
From
\[
        X=\frac{256t}{(t+64)^2},
\]
one obtains
\[
        \theta_X
        =
        \frac{t+64}{64-t}\theta_t.
\]
Hence
\[
        \theta_z=\lambda(t)\theta_X,
\]
where
\[
        \lambda(t)
        =
        \frac{(t+256)(64-t)}{(512-t)(t+64)}.
\]
Moreover,
\[
        \theta_X\log\rho
        =
        \mu(t)
        =
        -\frac{96t}{(t+256)(64-t)}.
\]
Consequently,
\[
        \theta_z\Fone
        =
        \lambda\rho
        \left(
        \theta_X\Ftwo+\mu\Ftwo
        \right).
\]

\section{Linear-form bridge}

Suppose a level-one CM linear form is written as
\[
        \frac1\pi
        =
        \sqrt d
        \left[
        A\Fone(z)+B\theta_z\Fone(z)
        \right].
\]
Using
\[
        \Fone=\rho\Ftwo
\]
and
\[
        \theta_z\Fone
        =
        \lambda\rho
        \left(
        \theta_X\Ftwo+\mu\Ftwo
        \right),
\]
this gives
\[
        \frac1\pi
        =
        \sqrt d
        \left[
        \rho(A+B\lambda\mu)\Ftwo(X)
        +
        B\lambda\rho\,\theta_X\Ftwo(X)
        \right].
\]
Thus the Fricke-side form
\[
        \frac1\pi
        =
        \sqrt d
        \left[
        \alpha\Ftwo(X)+S\theta_X\Ftwo(X)
        \right]
\]
is obtained from
\[
        S=B\lambda\rho,
        \qquad
        \alpha=\rho A+S\mu.
\]
Conversely,
\[
        B=\frac{S}{\rho\lambda},
        \qquad
        A=\frac{\alpha-S\mu}{\rho}.
\]
On the Fricke-cusp branch,
\[
        S=\sqrt{1-X}=\frac{t-64}{t+64}.
\]

\section{The exact \texorpdfstring{\(D=-163\)}{D=-163} coefficient bridge}

For \(D=-163\), let \(t\) be the Fricke-cusp root of
\[
        t^3+262537412640768768t^2+196608t+16777216=0.
\]
Let \(\alpha\) be the unique real root, which is positive, of
\[
        668649972819460401Y^3
        +50012252033677839Y^2
        +1467Y
        -41450311432931=0.
\]
Let
\[
        \chi_{163}^*=-223263987730882560.
\]
This is the rational CM value of the level-one almost-holomorphic
Chudnovsky coefficient at discriminant \(D=-163\), in the normalization
used by the Chudnovskys~\cite{Chudnovsky}; see also Milla~\cite{Milla}
for an efficient treatment of the Chudnovsky coefficients.  Here it is
used only as a compact way of writing the transported coefficient, and the
exact coefficient identities below certify the normalization.  It recovers
the same value through
\[
\alpha =
\frac{1}{12}\left(
4\frac{t^2+80t+1024}{(t+64)^2}
-\frac{t+256}{t+64}
-1
-2\chi_{163}^*
\frac{t^2}{(t+64)(t+256)(t-512)}
\right).
\]

\begin{theorem}[Exact Chudnovsky--Fricke bridge at \(D=-163\)]
With
\[
        S=\frac{t-64}{t+64},
        \qquad
        \rho^2=\frac{t+256}{t+64},
\]
\[
        \lambda=
        \frac{(t+256)(64-t)}{(512-t)(t+64)},
        \qquad
        \mu=
        -\frac{96t}{(t+256)(64-t)},
\]
and
\[
        A=\frac{\alpha-S\mu}{\rho},
        \qquad
        B=\frac{S}{\rho\lambda},
\]
one has
\[
        \sqrt{163}\,A
        =
        \frac{13591409}{426880\sqrt{10005}},
\]
and
\[
        \sqrt{163}\,B
        =
        \frac{272570067}{213440\sqrt{10005}}.
\]
Consequently, the cubic \(\Gamma_0(2)^+\) Fricke linear form at
\(D=-163\) transforms exactly into the normalized Chudnovsky linear form.
Thus the \(D=-163\) Fricke identity in this paper is not used as an
independent replacement for the Chudnovsky formula; it is the exact
level-two transport of the Chudnovsky linear form through the
Chudnovsky--Fricke bridge.
\end{theorem}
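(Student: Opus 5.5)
The plan is to prove the two coefficient identities as exact algebraic identities in the cubic field \(K=\Q(t)\), where \(t\) is the Fricke-cusp root of the stated cubic. The transport statement then follows from the linear-form bridge of the previous section.

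\textbf{Step 1: set up \(A\) and \(B\) as elements of \(K\).} First observe that \(\alpha\in K\): the displayed formula expresses \(\alpha\) as a rational function of \(t\) and the rational number \(\chi_{163}^*\). Reduce that expression modulo the minimal cubic of \(t\) to get an explicit polynomial of degree at most two in \(t\). As a consistency check, compute its minimal polynomial by a resultant and confirm it is the stated cubic in \(Y\). Next, \(S\), \(\lambda\) and \(\mu\) are explicit rational functions of \(t\), so \(A\rho=\alpha-S\mu\) and \(B\rho=S/\lambda\) are explicit elements of \(K\). Finally, \(\rho^2=(t+256)/(t+64)\) gives
\[
        163A^2=163\frac{(\alpha-S\mu)^2(t+64)}{t+256},
        \qquad
        163B^2=163\frac{S^2(t+64)}{\lambda^2(t+256)}.
\]

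\textbf{Step 2: reduce the claims to rational identities in \(K\).} The claims are equivalent to
\[
        163A^2=\frac{13591409^2}{426880^2\cdot 10005},
        \qquad
        163B^2=\frac{272570067^2}{213440^2\cdot 10005},
\]
together with a sign check. To verify each one, write the left side as \(N(t)/M(t)\) with \(N,M\in\Q[t]\). Then reduce \(N(t)-cM(t)\) modulo the cubic for \(t\), where \(c\) is the target constant, and check that the remainder is zero. Equivalently, show that the resultant with the cubic vanishes or that a Euclidean remainder is identically zero.

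For \(B\) I expect a structural shortcut. Using the derivative dictionary, \(B=S/(\rho\lambda)\) should simplify to an expression in \(j=(t+256)^3/t^2=-640320^3\) only, because \(B\) is the level-one coefficient and must lie in \(\Q(j)\) up to the square root. Indeed,
\[
        \frac{S^2(t+64)}{\lambda^2(t+256)}=\frac{(t-512)^2}{t^2}\cdot\frac{t^2}{(t+256)^3}\cdot(t+256)\cdots
\]
should collapse to \(1-1728/j\), that is, to \(1-z\). Then \(163B^2=163(1-z)\) can be checked directly with rational arithmetic against the classical Chudnovsky value, using \(640320^3-1728\cdot\ldots\) and \(10005=3\cdot5\cdot23\cdot29\).

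Similarly for \(A\), the point of introducing \(\chi_{163}^*\) is that \(\alpha-S\mu\), after dividing by \(\rho\), should reduce to a combination of \(E_4/E_2^*\)-type level-one data. So \(A\) should equal a rational expression in \(z\) and \(\chi^*\) times \(\sqrt{1-z}\)-type factors, and it should land in \(\Q\cdot(\sqrt{163}\sqrt{10005})^{-1}\).

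\textbf{Step 3: fix signs.} On the real branch \(t<-512\) of the earlier Remark, \(\rho>0\), \(S>0\) and \(\lambda>0\) (check: \(64-t>0\), \(512-t>0\), and both \(t+256\) and \(t+64\) are negative). Hence \(B>0\). For \(A\), use the numerical root \(\xi\) to obtain the sign of \(\alpha-S\mu\), which is positive since \(\alpha>0\) and \(S\mu\) is tiny. Together with the squared identities, this gives the stated values. The final sentence of the theorem is then just the converse formulas \(B=S/(\rho\lambda)\) and \(A=(\alpha-S\mu)/\rho\) from the linear-form bridge section, applied to the Fricke form of the general linear-form section.

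\textbf{Main obstacle.} The main obstacle is the \(A\) identity. The coefficient \(\alpha\) carries a genuinely cubic irrationality, and all of it has to cancel in \(\alpha-S\mu\) up to the factor \(\rho\). With 18-digit coefficients this is a heavy exact computation. I would first try to simplify \(\alpha-S\mu\) symbolically as a rational function of \(t\) and \(\chi^*\), hoping it factors as \(\rho^2\cdot R(z)\) with \(R\) rational in \(z\). Only if that fails would I fall back on brute-force reduction modulo the cubic.
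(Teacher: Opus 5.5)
Your proposal is correct, and its backbone is exactly the paper's proof: verify \(163A^2\) and \(163B^2\) as identities in \(K=\Q(t)\), then fix signs on the real branch \(t<-512\). The paper does this by reducing both squared identities modulo \(P_t\) and then reading off \(S,\rho,\lambda,\alpha-S\mu>0\). What you add is the structural shortcut, which the paper does not use, and it does work. Since \(S/\lambda=(t-512)/(t+256)\) and \((t-512)^2(t+64)=(j-1728)t^2\), you get \(B^2=(t-512)^2(t+64)/(t+256)^3=1-z\). Using \(t^2+80t+1024=(t+16)(t+64)\), the displayed formula for \(\alpha\) collapses to
\[
        \alpha-S\mu=\frac{(t-512)^2(t+64)-\chi_{163}^*t^2}{6(t+64)(t+256)(t-512)},
        \qquad
        A^2=\frac{(j-1728-\chi_{163}^*)^2}{36\,j\,(j-1728)}
        =\frac{(1-z-\chi_{163}^*z/1728)^2}{36(1-z)}.
\]
Note that it is \(A^2=(\alpha-S\mu)^2/\rho^2\) that lies in \(\Q(z)\), not \(\alpha-S\mu\) itself. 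Your literal hope \(\alpha-S\mu=\rho^2R(z)\) fails, because \(\alpha-S\mu=\rho^2\cdot\frac16(1-z-\chi_{163}^*z/1728)\cdot\frac{t+256}{t-512}\) and the last factor has degree one in \(t\). With \(j=-640320^3\) and \(640320^3+1728=163\cdot 40133016^2\), both targets become identities in \(\Q\). For instance, \(\sqrt{163}\,B=163\cdot 40133016/640320^{3/2}\), using \(640320=64\cdot 10005\). Even the sign of \(A\) becomes exact rather than numerical, since \(j-1728-\chi_{163}^*<0\) and \((t+64)(t+256)(t-512)<0\). Your route therefore avoids any reduction modulo the 18-digit cubic, and it explains why the transported coefficients are level-one data in \(\Q(z)(\sqrt{1-z})\). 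The paper's route instead gives a uniform mechanical certificate that does not require spotting these factorizations.
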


\begin{proof}
The proof is an identity in the cubic field \(K=\Q(t)\).  Let
\[
        P_t(t)=t^3+262537412640768768t^2+196608t+16777216.
\]
It is enough to verify the squared identities
\[
        163\frac{(\alpha-S\mu)^2}{\rho^2}
        =
        \frac{13591409^2}{426880^2\cdot10005},
\]
and
\[
        163\frac{S^2}{\rho^2\lambda^2}
        =
        \frac{272570067^2}{213440^2\cdot10005}.
\]
After bringing both sides to a common denominator, the numerator of each
difference has zero remainder modulo \(P_t(t)\), while the corresponding
denominator is coprime to \(P_t(t)\).  Thus both identities hold in \(K\).

On the selected real branch,
\[
        t\approx -262537412640768768< -512,
\]
so \(S>0\), \(\rho>0\), \(\lambda>0\), and \(\alpha-S\mu>0\).  Hence the
squared identities give the stated unsquared identities.
\end{proof}

\begin{remark}[Sign certification]
The proof above first verifies squared identities in the cubic field.  The
signs are fixed by the selected real branch.  Equivalently, a direct
high-precision evaluation of the Fricke series with the displayed
\(\xi\), \(\alpha\), and \(S=\sqrt{1-\xi}\) gives \(1/\pi\) with the
positive signs shown in the theorem.  The accompanying verification
script records this numerical sign check.
\end{remark}

\begin{remark}[Numerical sign certificate]
The squared identities in the theorem determine the two transported
coefficients up to sign.  The selected real Fricke-cusp branch is the
large negative real root of
\[
        t^3+262537412640768768t^2+196608t+16777216=0,
\]
namely
\[
        t=-262537412640768767.9999999999992511238759367998009544\ldots .
\]
It gives
\[
        \xi=-9.75099119873959384855847034674804042405\ldots\cdot10^{-16},
\]
\[
        S=1.0000000000000004875495599369795735756368\ldots,
\]
and
\[
        \alpha=0.0249319544687932303692549895368722515751\ldots .
\]
A direct high-precision evaluation of the Fricke series gives
\[
\sqrt{163}\left(\alpha \Ftwo(\xi)+S\theta_X\Ftwo(\xi)\right)
-\frac1\pi
=
5.81\ldots\cdot10^{-121}.
\]
This certifies the signed branch used in the exact bridge.
\end{remark}

\section{The explicit \texorpdfstring{\(D=-163\)}{D=-163} cubic Fricke formula}

Since
\[
        \Ftwo(X)=
        {}_3F_2\!\left(\frac14,\frac12,\frac34;1,1;X\right)
        =\sum_{n=0}^{\infty}\frac{(4n)!}{256^n(n!)^4}X^n,
\]
the \(D=-163\) specialization gives
\[
\boxed{
\frac1\pi
=
\sqrt{163}\sum_{n=0}^{\infty}
\frac{(4n)!}{256^n(n!)^4}
\xi^n
\left(\alpha+\sqrt{1-\xi}\,n\right).
}
\]
Here \(\xi\) is the Fricke-cusp root of the cubic polynomial displayed
above, and \(\alpha\) is the unique real root, which is positive, of
\[
        668649972819460401Y^3
        +50012252033677839Y^2
        +1467Y
        -41450311432931=0.
\]
Numerically,
\[
        |\xi|\approx 9.7509911987\cdot 10^{-16},
        \qquad
        -\log_{10}|\xi|\approx 15.0109512356.
\]
This number measures only the geometric contraction of the summands.  It
does not imply a practical improvement over the Chudnovsky algorithm,
whose rational arithmetic and binary splitting are much better suited to
large-scale computation.  The point of the formula is the exact modular
transport and the cubic CM structure.

\section{Class-number-one cubic Fricke companions for \texorpdfstring{\(\Gamma_0(2)^+\)}{Gamma0(2)+}}

This section isolates the finite class-number-one part of the
\(\Gamma_0(2)^+\) construction.  The purpose is not to classify all CM
orders, but to prove the precise statement needed for the fundamental
Heegner discriminants of class number one.

Recall the level-two Hauptmodul
\[
        t(\tau)=\left(\frac{\eta(\tau)}{\eta(2\tau)}\right)^{24},
\]
with
\[
        j=\frac{(t+256)^3}{t^2},
        \qquad
        X=\frac{256t}{(t+64)^2},
        \qquad
        z=\frac{1728}{j}.
\]
Eliminating \(t\) gives the bridge equation
\[
\begin{aligned}
0=\Phi(z,X):={}&729z^2X^3+3888z^2X^2+6912z^2X+4096z^2\\
&-1458zX^3+22356zX^2-27648zX+729X^3.
\end{aligned}
\]
The Fricke-cusp branch is characterized by
\[
        \frac{X}{z}\longrightarrow \frac4{27}
        \qquad (z\to0),
\]
because
\[
        z=\frac{1728}{t}+O(t^{-2}),
        \qquad
        X=\frac{256}{t}+O(t^{-2})
        \qquad (t\to\infty).
\]

Clearing the pole \(z=1728/j\) turns \(\Phi\) into a polynomial in \(j\)
and \(X\):
\[
\begin{aligned}
        j^2\,\Phi\!\left(\tfrac{1728}{j},X\right)
        ={}&729X^3\,j^2\\
        &+1728\,(-1458X^3+22356X^2-27648X)\,j\\
        &+1728^2\,(729X^3+3888X^2+6912X+4096).
\end{aligned}
\]
For a discriminant \(D<0\), let \(H_D\) denote the CM class polynomial of the order of discriminant \(D\).  Set
\[
        \Psi_D(X)
        =
        \operatorname{Res}_j\!\left(H_D(j),\,
        j^2\,\Phi(\tfrac{1728}{j},X)\right),
\]
cleared primitively over \(\mathbb Q\).  For class number one,
\(H_D(j)=j-j_D\) is monic linear, so
\[
        \Psi_D(X)=j_D^{\,2}\,\Phi(z_D,X),
        \qquad
        z_D=\frac{1728}{j_D};
\]
that is, \(\Psi_D\) is the direct specialization of the bridge polynomial,
up to the nonzero rational factor \(j_D^{2}\).  The resultant form is taken
as the definition, since it is the form that persists for class number greater than one and for ring class orders.

\begin{lemma}[The inert-prime-two ring-class orbit]
\label{lem:inert-two-orbit}
Let \(D<-4\) be a fundamental discriminant with \(h(D)=1\), and let
\(K=\mathbb Q(\sqrt D)\).  If \(2\) is inert in \(K\), equivalently
\(D\equiv5\pmod 8\), then the three Fricke values lying above the CM
point \(j_D\) form one ring-class orbit of degree \(3\).  More precisely,
the three order-two quotients of an elliptic curve with CM by
\(\mathcal O_D\) descend to elliptic curves with CM by the order
\[
        \mathcal O_{4D}=\mathbb Z+2\mathcal O_D,
\]
and the corresponding singular Fricke values form a single Galois orbit.
The quadratic resolvent field of the associated cubic is
\(\mathbb Q(\sqrt D)\).
\end{lemma}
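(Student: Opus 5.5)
The plan is to read the three roots of \(\Psi_D(X)=j_D^2\Phi(z_D,X)\) as the values of \(X\) on the three \(\Gamma_0(2)\)-structures over the CM curve, and then use CM theory to identify the Galois action on them.

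\emph{Step 1: the roots are the three \(2\)-isogenies.} The map \(t\mapsto j=(t+256)^3/t^2\) has degree \(3\). Fix \(E\) with \(j(E)=j_D\). The three preimages \(t\) of \(j_D\) correspond to the three cyclic subgroups \(C\subset E[2]\), that is, to the three pairs \((E,C)\). The Fricke involution \(t\mapsto 4096/t\) fixes \(X=256t/(t+64)^2\), so \(X(E,C)\) depends only on the unordered pair \(\{E,E/C\}\). By the resultant lemma, \(\Psi_D\) is, up to a constant, \(\prod_C (X-X(E,C))\). Since \(D<-4\), the values avoid the coordinate-degenerate points, so the relevant \(t\) satisfy \(t\neq 0,\pm64,-256,512\).

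\emph{Step 2: inert \(2\) forces all quotients to have CM by \(\mathcal O_{4D}\).} Because \(2\) is inert, \(\mathcal O_D/2\mathcal O_D\cong\mathbb F_4\) acts on \(E[2]\) as a field. It therefore has no stable line, so no \(C\) is the kernel of an endomorphism. For a lattice \(\Lambda=\mathfrak a\) and an index-two superlattice \(\Lambda'\), the multiplier ring \(\{\alpha:\alpha\Lambda'\subset\Lambda'\}\) contains \(\mathbb Z+2\mathcal O_D\). It is not \(\mathcal O_D\), since \(\Lambda'/\Lambda\) is not \(\mathcal O_D\)-stable. Hence \(E/C\) has CM by exactly \(\mathcal O_{4D}\).

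The class number formula gives
\[
h(\mathcal O_{4D})=h(D)\cdot\frac{2}{[\mathcal O_D^\times:\mathcal O_{4D}^\times]}\Bigl(1-\Bigl(\frac D2\Bigr)\frac12\Bigr)=1\cdot 2\cdot\frac32=3 .
\]
Thus the three quotients \(E/C\) realize the three classes of \(\mathcal O_{4D}\), and their \(j\)-invariants are the three distinct roots of \(H_{4D}\).

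\emph{Step 3: the Galois orbit.} By the main theorem of CM, \(\mathrm{Gal}(K_{4D}/K)\cong \mathrm{Pic}(\mathcal O_{4D})\cong\mathbb Z/3\), where \(K_{4D}\) is the ring class field. This group acts simply transitively on the \(j(E/C)\). Since \(j_D\in\mathbb Q\) is fixed, the action permutes the pairs \((E,C)\) compatibly, so it acts transitively on the three values \(X(E,C)\).

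These values are distinct. Otherwise the fibre of \(X\) would collapse, and since \(X\) is a Fricke-invariant Hauptmodul this forces \((E,C)\) and \((E,C')\) to be Fricke-related. But then \(E/C\cong E\) up to isomorphism, contradicting \(\mathrm{End}(E/C)=\mathcal O_{4D}\neq\mathcal O_D\). Hence \(\Psi_D\) is irreducible of degree \(3\) over \(K\), and a fortiori over \(\mathbb Q\).

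\emph{Step 4: the resolvent.} The splitting field of \(\Psi_D\) is \(\mathbb Q(X_1,X_2,X_3)\), which is contained in the ring class field \(K_{4D}\). That field is generalized dihedral of degree \(6\) over \(\mathbb Q\), with Galois group \(S_3\), and its unique quadratic subfield is \(K\). So the Galois group of \(\Psi_D\) is \(S_3\) and the quadratic resolvent field is \(\mathbb Q(\sqrt D)\). The splitting field is all of \(K_{4D}\) rather than just \(K(X_1)\), because complex conjugation acts nontrivially on the three values: exactly one of them is real.

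\emph{Main obstacle.} The delicate step is the descent assertion in Step 3. One must justify that \(X(E,C)\) is a value of a modular function with rational \(q\)-expansion on \(\Gamma_0(2)^+\), so that Shimura reciprocity applies and the Galois action on \(X\)-values is induced by the action on the isogeny data. I would handle this via the rationality of \(t\) as an eta quotient together with the moduli interpretation of \(X_0(2)\). As a backup, explicit discriminant certificates can confirm irreducibility case by case for \(D=-11,-19,-43,-67,-163\).
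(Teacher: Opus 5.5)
Your proposal is correct and follows essentially the same route as the paper's proof. Both use the absence of horizontal $2$-isogenies when $2$ is inert, $h(\mathcal O_{4D})=3$, transport of the ring-class Galois action to the $X$-values, and the $S_3$ (dihedral) structure of the ring class field for the resolvent. Yours adds detail the paper leaves implicit, notably the Fricke-involution argument that the three $X$-values are distinct.

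Three small points:
\begin{itemize}
\item The pairwise non-isomorphism of the three quotients $E/C$ needs a line of proof. Taking $\Lambda=\mathcal O_D$, each $\Lambda'_i\mathcal O_D=\tfrac12\mathcal O_D$, so $\Lambda'_1=\lambda\Lambda'_2$ forces $\lambda\in\mathcal O_D^\times=\{\pm1\}$.
\item It is $D\equiv 5\pmod 8$, not $D<-4$, that keeps the fibre away from $t=64$, since $t=64$ lies over $j_{-8}=8000$.
\item Your unproved claim that exactly one root is real is unnecessary. $S_3$ has no normal subgroup of order $2$, so an irreducible cubic whose splitting field lies in an $S_3$-extension already has Galois group $S_3$.
\end{itemize}
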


\begin{proof}
The standard CM description of \(\ell\)-isogeny volcanoes and
Shimura reciprocity; see Cox~\cite{Cox} and Schertz~\cite{Schertz} for
ring class fields and singular values, and Kohel~\cite{Kohel} and
Sutherland~\cite{Sutherland} for the volcano formulation.
The fibre of \(X_0(2)\to X(1)\) over a CM elliptic curve \(E_D\) consists
of the three cyclic subgroups of order \(2\) in \(E_D[2]\).  The number
of horizontal \(2\)-isogenies from the maximal-order vertex is
\[
        1+(D|2).
\]
If \(D\equiv5\pmod8\), then \((D|2)=-1\), so there are no horizontal
edges; all three quotients descend to the order of conductor \(2\),
namely \(\mathcal O_{4D}\).

The ring class number formula for the order of conductor \(2\) gives,
for \(D<-4\),
\[
        h(4D)=
        \frac{2h(D)}{[\mathcal O_K^\times:\mathcal O_{4D}^\times]}
        \left(1-\frac{(D|2)}{2}\right).
\]
Here the unit index is \(1\).  With \(h(D)=1\) and \((D|2)=-1\) this gives
\(h(4D)=3\).  The class group \(\mathrm{Cl}(\mathcal O_{4D})\) therefore
acts simply transitively on the three descending CM targets.  Shimura
reciprocity transports this action to the singular values of the rational
Fricke function \(X\).  Hence the three Fricke values are one orbit of
degree \(3\).  Complex conjugation acts by inversion on this cyclic
order-three orbit, so the quadratic resolvent field is the CM field \(K\).
\end{proof}

\begin{proposition}[CM interpretation of the cubic entries]\label{prop:cm-inert-two}
Let \(D<-4\) be fundamental with \(h(D)=1\), and let
\(K=\mathbb Q(\sqrt D)\).  Suppose that \(2\) is inert in \(K\),
equivalently \(D\equiv5\pmod 8\).  Then the specialized Fricke-cusp
factor of \(\Phi(z_D,X)\) is irreducible cubic over \(\mathbb Q\), and
its quadratic resolvent field is \(\mathbb Q(\sqrt D)\).
\end{proposition}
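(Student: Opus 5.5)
The plan is to deduce the proposition from Lemma~\ref{lem:inert-two-orbit} and the bridge lemma. The main work is to identify the roots of the specialized polynomial \(\Phi(z_D,X)\) with the Fricke values in that lemma.

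\textbf{Step 1: the roots are the Fricke values.} By the Fricke bridge polynomial lemma, \(\Phi(z,X)\) is, up to the constant \(2^{-36}\), the resultant in \(t\) of \(z(t+256)^3-1728t^2\) and \(X(t+64)^2-256t\). Specializing \(z=z_D=1728/j_D\), the first polynomial becomes the level-two fibre polynomial \(j_D\,t^2-(t+256)^3\), up to the factor \(z_D/j_D\ne0\). Its three roots \(t_1,t_2,t_3\) are the values of the Hauptmodul \(t\) at the three points of \(X_0(2)\) above \(j_D\), that is, at the three cyclic \(2\)-subgroups of \(E_D\).

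The resultant then factors as a constant times \(\prod_i\bigl(X(t_i+64)^2-256t_i\bigr)\). Since \(X\) has degree \(1\) in each factor, the roots of \(\Phi(z_D,X)\) in \(X\) are exactly \(X(t_i)=256t_i/(t_i+64)^2\), \(i=1,2,3\).

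The leading coefficient of \(\Phi(z_D,X)\) in \(X\) is \(729(z_D-1)^2\). This is nonzero because \(j_D\neq1728\) for \(D<-4\). Hence \(\Phi(z_D,X)\) is a genuine cubic in \(X\), and it contains the Fricke-cusp root.

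\textbf{Step 2: irreducibility.} Lemma~\ref{lem:inert-two-orbit} says the three values \(X(t_i)\) form one Galois orbit of size \(3\) over \(\mathbb Q\) when \(D\equiv5\pmod8\). A cubic with rational coefficients whose roots form a single Galois orbit of size \(3\) has no rational factor of degree \(1\) or \(2\), so it is irreducible. In particular the Fricke-cusp factor is the whole cubic, and it is irreducible.

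One subtlety must be handled here. The lemma's orbit statement concerns \(t\)-values, or equivalently the \(2\)-isogenous curves. Transferring it to \(X\) requires that \(X\) separates the three points, i.e.\ that the \(X(t_i)\) are distinct. The map \(t\mapsto X\) is the quotient by the Fricke involution \(t\mapsto 4096/t\). So I must check that no two \(t_i\) are Fricke-swapped and that none is a fixed point.

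Fixed points of the Fricke involution are CM by \(\mathbb Z[\sqrt{-2}]\) or by orders attached to \(\sqrt{-1}\), \(\sqrt{-2}\), or \((1+\sqrt{-7})/2\) at level \(2\). These do not occur over \(K\) with \(D\equiv5\pmod8\) and \(D<-4\).

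If two of the \(t_i\) were swapped, their images would coincide, and then \(X(t_k)\) would be rational or repeated. That contradicts the orbit having size \(3\), since Galois acts simply transitively through \(\mathrm{Cl}(\mathcal O_{4D})\cong\mathbb Z/3\) and that action commutes with the rational map \(X\).

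A cleaner route uses that the Fricke involution sends the order \(\mathcal O_{4D}\)-points to the order \(\mathcal O_{4D}\)-points. Then the fibre of \(X\) has size \(3\) precisely when the discriminant of \(\Phi(z_D,\cdot)\) is nonzero. This discriminant is a one-line rational evaluation per \(D\), and I would treat it as a finite check if needed.

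\textbf{Step 3: the resolvent.} The Galois group of the cubic is either \(A_3\) or \(S_3\), and the lemma identifies it. The splitting field is the ring class field of \(\mathcal O_{4D}\), which is a dihedral \(S_3\) extension of \(\mathbb Q\) with quadratic subfield \(K\). Complex conjugation inverts the class group, so the group is \(S_3\) and not \(A_3\), and the fixed field of \(A_3\) is \(K\). That fixed field is exactly \(\mathbb Q(\sqrt{\operatorname{disc}})\), giving \(\operatorname{disc}(P_D^{\rm mon})/D\in(\mathbb Q^\times)^2\).

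\textbf{Main obstacle.} The hard step is the separation issue in Step~2: the Galois orbit on the three isogenies must pass injectively to the Fricke coordinate \(X\). I would first try the CM argument, which uses that the Fricke involution commutes with Galois and that the orbit is a torsor of odd order \(3\). A fibre of the degree-two map \(t\mapsto X\) has size \(1\) or \(2\). If the map collapsed two of the three \(t_i\), the partition of the orbit into fibres would be a Galois-stable partition of a set of size \(3\) into blocks of sizes \(2\) and \(1\). But \(\mathbb Z/3\) acting transitively admits no such partition, which forces the \(X(t_i)\) to be distinct.
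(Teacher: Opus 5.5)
Your proposal is correct and follows the paper's own route. Lemma~\ref{lem:inert-two-orbit} gives a single Galois orbit of size three above \(j_D\); rationality of \(\Phi(z_D,X)\) turns that orbit into an irreducible cubic; and the dihedral structure of the ring class field of \(\mathcal O_{4D}\) (complex conjugation inverting the class group) gives the resolvent \(\mathbb Q(\sqrt D)\). Your resultant identification of the roots as \(X(t_i)\), and your block argument for their distinctness (a transitive Galois action on \(\{t_1,t_2,t_3\}\) cannot preserve a partition into fibres \(\{t,4096/t\}\) of the degree-two map \(t\mapsto X\)), make explicit what the paper handles by the nonzero discriminant certificates. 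The aside listing \((1+\sqrt{-7})/2\) among Fricke fixed points is a harmless slip: the only fixed points are \(t=\pm64\), i.e.\ \(j=1728,8000\), and for \(D=-7\) the involution swaps the two horizontal points rather than fixing them.
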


\begin{proof}
By Lemma~\ref{lem:inert-two-orbit}, the three Fricke values above \(j_D\)
form one ring-class orbit of degree \(3\).  The bridge polynomial has
rational coefficients after specialization at \(z_D\in\mathbb Q\), so
this orbit is exactly an irreducible cubic factor over \(\mathbb Q\).  The
roots are distinct: in the displayed five cases this is independently
certified by the nonzero discriminants below; equivalently, in the CM
picture equality of two Fricke values would identify two distinct
isogeny classes in the conductor-two ring-class orbit.  The statement
about the quadratic resolvent field is the resolvent statement in
Lemma~\ref{lem:inert-two-orbit}.  In the present paper this CM prediction
is also independently certified by the exact discriminant identities
\(\operatorname{disc}(P_D^{\rm mon})/D\in(\mathbb Q^\times)^2\) displayed
below, so the displayed polynomial identities do not rely on numerical
factorization.
\end{proof}

\begin{theorem}[Class-number-one cubic Fricke companions]\label{thm:h1-classification}
Let \(D<0\) be a fundamental Heegner discriminant with class number one,
and let \(\tau_D\) be the CM point of the principal form,
\[
        \tau_D=
        \begin{cases}
        \dfrac{\sqrt D}{2}, & D\equiv 0 \pmod 4,\\[2.5mm]
        \dfrac{1+\sqrt D}{2}, & D\equiv 1 \pmod 4,
        \end{cases}
        \qquad
        j_D=j(\tau_D).
\]
For \(D\neq -3\), let \(\Psi_D(X)\) be the specialized Fricke bridge
polynomial defined above.  Then
\[
\begin{gathered}
        \Phi(z_D,X)\text{ has an irreducible cubic Fricke-cusp factor}\\
        \Longleftrightarrow
        D\in\{-11,-19,-43,-67,-163\}.
\end{gathered}
\]
For \(D=-4\) and \(D=-7\) the Fricke-cusp parameter is rational, while
for \(D=-8\) the nontrivial Fricke companion is quadratic.  The case
\(D=-3\) is exceptional: \(j_D=0\), so \(z=1728/j\) is the point at
infinity in the level-one coordinate, and the corresponding Fricke value
is outside the convergence disk used in this paper.
\end{theorem}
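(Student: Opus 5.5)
The proof is a finite case analysis over the nine class-number-one discriminants \(D=-3,-4,-7,-8,-11,-19,-43,-67,-163\). Since \(D=-3\) is excluded, eight cases remain. They split according to the splitting of \(2\) in \(K=\mathbb Q(\sqrt D)\).

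\textbf{The inert cases.} The discriminants \(D=-11,-19,-43,-67,-163\) are exactly those with \(D\equiv5\pmod 8\). For these, Proposition~\ref{prop:cm-inert-two} applies directly. It shows that the Fricke-cusp factor of \(\Phi(z_D,X)\) is irreducible cubic over \(\mathbb Q\), with quadratic resolvent \(\mathbb Q(\sqrt D)\). This proves the implication "\(\Leftarrow\)". As an independent certificate, in each case I would write down the primitive specialization \(\Psi_D(X)=j_D^2\Phi(z_D,X)\) using the rational values \(j_D\): \(-2^{15}\), \(-2^{15}3^3\), \(-2^{18}3^35^3\), \(-2^{15}3^35^311^3\), and \(-640320^3\). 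I would then check irreducibility by the rational root test on the monic normalization \(P_D^{\rm mon}\), and check that \(\operatorname{disc}(P_D^{\rm mon})/D\) is a nonzero rational square.

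\textbf{The remaining cases.} For "\(\Rightarrow\)" I must show that for \(D=-4,-7,-8\) the cubic \(\Phi(z_D,X)\) is reducible. This again comes from the volcano description used in Lemma~\ref{lem:inert-two-orbit}. The number of horizontal \(2\)-isogenies from the maximal-order vertex is \(1+(D|2)\), so some of the three order-two subgroups give quotients with CM by \(\mathcal O_D\) itself or by an order of class number one.

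\begin{itemize}
\item For \(D=-7\), the prime \(2\) splits. There are two horizontal edges, whose targets have \(j\)-invariant \(j_{-7}\in\mathbb Q\), and one descending edge to \(\mathcal O_{-28}\), which also has class number one. All three Fricke values are therefore rational, and the cusp branch value is a rational root.
\item For \(D=-4\), the prime \(2\) ramifies, giving one horizontal edge to \(j=1728\). The descending target is \(\mathcal O_{-16}\), with \(h(-16)=1\) and \(j=287496\). Because the unit index is \(2\), the two descending subgroups are identified by the automorphism \(i\). Hence all Fricke values are rational, and in particular the cusp-branch value is.
\item For \(D=-8\), the prime \(2\) ramifies, giving one horizontal edge back to \(j=8000\) and hence a rational root. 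The other two edges go to \(\mathcal O_{-32}\), with \(h(-32)=2\), so they form a quadratic factor.
\end{itemize}

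In each of these three cases I would confirm the factorization concretely. Substituting \(z=1728/j_D\) with \(j_D=1728,-3375,8000\) into \(\Phi\), I would exhibit the explicit rational root, and for \(D=-8\) the conjugate quadratic factor. Alternatively, I would give the rational \(t\)-value, since \(t\in\mathbb Q\) implies \(X(t)\in\mathbb Q\). This direct factorization avoids any subtlety about which root the cusp branch selects.

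\textbf{The \(D=-3\) remark.} Here \(j_{-3}=0\), so \(z_D\) is infinite. The \(t\)-values satisfy \((t+256)^3=0\), giving \(t=-256\) and \(X=-256\cdot256/192^2=-16/9\). Since \(|X|>1\), this value lies outside the convergence disk, as the theorem states.

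The main obstacle is identifying the \emph{Fricke-cusp} factor in the reducible cases, that is, checking that the rational root is the one on the cusp branch rather than a different root. I would handle this by real continuation along the \(t\)-line, as in the earlier remark on the real branch: the principal \(q\) is real, so I would locate the cusp-branch \(t\) numerically and match it to the explicit rational factor.
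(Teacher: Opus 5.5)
Your proposal is correct, but its main argument differs from the paper's. The paper's proof never uses Proposition~\ref{prop:cm-inert-two}. It substitutes \(z_D=1728/j_D\) into \(\Phi\) for all eight discriminants, factors exactly over \(\mathbb Q\) (patterns \(1^2\), \(1^2 1^1\), \(1^1 2^1\), \(3^1\)), and labels the cusp root by numerical root isolation, so the CM proposition serves only as interpretation. You instead prove ``\(\Leftarrow\)'' from the inert-two orbit and predict the other patterns from the \(2\)-isogeny volcano, keeping factorization as a certificate. Your route explains why exactly \(D\equiv5\pmod 8\) occurs and points beyond class number one. The paper's computation is self-contained and fixes multiplicities and degenerations that your heuristics blur.

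There are two slips in those heuristics. First, for \(D=-4\) the \(X^3\)-coefficient \(729(z-1)^2\) vanishes, so \(\Phi(1,X)=4(81X-32)^2\) is only quadratic; the horizontal edge (\(t=-64\)) sits at \(X=\infty\). Second, for \(D=-7\) the two horizontal points have \(t=(-47\pm45\sqrt{-7})/2\notin\mathbb Q\), so a rational target \(j\) does not by itself give a rational Fricke value. Their common value \(X=256/81\) is rational, and a double root, because \(t\mapsto4096/t\) swaps them.

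Your ``main obstacle'' also needs no numerics. The point \(\tau_D\) represents the pair with subgroup \(\langle 1/2\rangle\), whose quotient \(\mathbb C/\langle1,2\tau_D\rangle\) has CM by the order of discriminant \(4D\). So the Fricke-cusp value always lies on the descending edge and has degree \(h(4D)\). Concretely, \(t=512,\,-4096,\,3584+2560\sqrt2\) for \(D=-4,-7,-8\), giving degrees \(1,1,2\), and the degree is \(3\) in the inert cases.
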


\begin{proof}
The fundamental discriminants of class number one are
\[
        -3,-4,-7,-8,-11,-19,-43,-67,-163.
\]
For \(D\neq -3\) the singular modulus \(j_D\) is a rational integer:
\[
\begin{aligned}
        j_{-4}&=1728, &\quad j_{-7}&=-3375, &\quad j_{-8}&=8000,\\
        j_{-11}&=-32768, &\quad j_{-19}&=-884736, &\quad j_{-43}&=-884736000,\\
        j_{-67}&=-147197952000, &\quad j_{-163}&=-640320^{3}. &&
\end{aligned}
\]
Hence \(\Psi_D(X)=j_D^{2}\,\Phi(1728/j_D,X)\in\mathbb Q[X]\).  Clearing
denominators primitively and factoring over \(\mathbb Q\) gives the
following complete finite table, in which \(d^{m}\) denotes an
irreducible factor of degree \(d\) with multiplicity \(m\):
\[
\begin{array}{c|c|c}
D & \text{factor pattern of }\Psi_D(X) & \text{Fricke-cusp degree}\\
\hline
-4   & 1^{2}            & 1\\
-7   & 1^{2}\,1^{1}     & 1\\
-8   & 1^{1}\,2^{1}     & 2\\
-11  & 3^{1}            & 3\\
-19  & 3^{1}            & 3\\
-43  & 3^{1}            & 3\\
-67  & 3^{1}            & 3\\
-163 & 3^{1}            & 3
\end{array}
\]
The branch relevant to the Fricke-cusp construction is selected by the
asymptotic condition \(X/z\to 4/27\), or equivalently by the local
large-\(t\) expansion \(X=256/t+O(t^{-2})\).  In the finite cases
listed below, exact factorization together with numerical root isolation
identifies this branch unambiguously.  For \(D=-7\) this is the simple root of the linear
factor (giving \(X=-256/3969\)), not the repeated factor; for \(D=-8\) it
is the root of the quadratic factor.  Thus the entries with factor
pattern \(3^{1}\) are exactly the irreducible cubic Fricke-cusp
companions,
\[
        D=-11,-19,-43,-67,-163,
\]
and the remaining finite cases are rational or quadratic as indicated.
Finally, \(D=-3\) has \(j_D=0\), so \(z=1728/j\) is not finite there.  On
the Fricke side this point corresponds to
\[
        t=-256,
        \qquad
        X=-\frac{16}{9},
\]
so \(|X|>1\).  Algebraically this is a rational degenerate Fricke
value; in the resultant normalization it corresponds to a triple
special point rather than an irreducible cubic companion.  The present
theorem concerns convergent Fricke-cusp hypergeometric representations on
the principal branch \(|X|<1\).  Analytic continuation or \(p\)-adic
interpretations of the value \(X=-16/9\) are outside the scope of this
classification.  Thus \(D=-3\) does not enter the finite specialization
above.
\end{proof}

\begin{proposition}[Exact finite certificate]
For the finite class-number-one list
\[
        D=-4,-7,-8,-11,-19,-43,-67,-163,
\]
the factor patterns in the proof are obtained by exact rational arithmetic:
one substitutes \(z_D=1728/j_D\) in \(\Phi(z,X)\), clears denominators
primitively, and factors in \(\mathbb Q[X]\).  No numerical approximation is
used to determine the degrees.  Numerical roots are used only to label the
local Fricke-cusp branch selected by \(X/z\to4/27\).
\end{proposition}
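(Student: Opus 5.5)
The plan is to make the certificate entirely mechanical and exact, working in $\mathbb Z[X]$ throughout. For each $D$ in the list I take the integer $j_D$ from the proof of Theorem~\ref{thm:h1-classification} and form
\[
        \Psi_D(X)=729X^3j_D^2+1728(-1458X^3+22356X^2-27648X)j_D+1728^2(729X^3+3888X^2+6912X+4096).
\]
This is the $j$-cleared form of $\Phi(1728/j_D,X)$. It already has integer coefficients, so clearing denominators reduces to dividing by the content. Its leading coefficient is $729j_D^2-1728\cdot1458\,j_D+1728^2\cdot729=729(j_D-1728)^2$. This vanishes only for $D=-4$, which I treat separately: there $\Psi_{-4}$ drops to degree at most two, and I factor the lower-degree polynomial directly, recording pattern $1^2$.

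For the degree claims I compute, in exact arithmetic, the square-free decomposition via $\gcd(\Psi_D,\Psi_D')$ over $\mathbb Q$, which yields the multiplicities. I then decide irreducibility of each square-free part by the rational root test. This suffices because a cubic over $\mathbb Q$ is irreducible if and only if it has no rational root, and a quadratic is irreducible if and only if its discriminant is not a rational square.

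\begin{itemize}
\item For $D=-7$, I exhibit the double linear factor and the simple root $X=-256/3969$ explicitly, and check the product identity.
\item For $D=-8$, I exhibit the linear factor and a quadratic whose discriminant is a non-square.
\item For $D=-11,-19,-43,-67,-163$, I show the content-reduced cubic has no rational root. An efficient route is reduction modulo a small prime $p$ not dividing the leading coefficient, where the cubic is irreducible over $\mathbb F_p$; that irreducibility lifts to $\mathbb Q$.
\end{itemize}

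I would cross-check each cubic case against Proposition~\ref{prop:cm-inert-two}. The relation $\operatorname{disc}/D\in(\mathbb Q^\times)^2$ is itself an exact integer-square test and is also nonzero, which confirms that the roots are simple.

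Finally, I separate the exact part from the numerical part. Numerics enter only to say which root is the Fricke-cusp one, that is, the root with $X\approx 4z_D/27$ for small $z_D$, or equivalently the root obtained from $t<-512$ via the real branch remark. I would make this rigorous with interval arithmetic: isolate the real roots of each factor with certified bounds and check that exactly one lies in a small interval around $4z_D/27$. This labelling affects neither the degrees nor the factor patterns.

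The main obstacle I expect is not the arithmetic but stating precisely what ``primitive clearing'' and ``factor pattern'' mean in the degenerate cases $D=-4$ and $D=-7$. There the leading coefficient or multiplicities behave unusually, and the resultant normalization could hide a factor at $X=\infty$. I would address this by homogenizing $\Psi_D$ in $(X:W)$ and reporting the factorization of the binary cubic, so that a degree drop appears as a factor $W$. This keeps the pattern table consistent with the resultant definition.
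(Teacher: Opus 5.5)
Your proposal is correct and follows the paper's route: substitute the integer $j_D$, clear content, factor exactly over $\mathbb Q$, and use numerics only to label the Fricke-cusp root. The paper leaves this computation to its ancillary Sage certificate; your leading coefficient $729(j_D-1728)^2$, the rational-root and mod-$p$ irreducibility tests, and the explicit $D=-7$ and $D=-8$ factorizations simply make that computation transparent. One small caution: the homogenization you suggest at the end would not keep the table consistent. At $D=-4$ the dropped degree comes from the third root $t=-64$, where $X=\infty$, so the binary form is $4W(81X-32W)^2$ with pattern $1^{2}1^{1}$; to match the paper's entry $1^{2}$, report the affine factorization $(81X-32)^2$, as you already do earlier in your plan.
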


\begin{remark}[Computational relation with broader CM orders]
The theorem is deliberately restricted to fundamental class-number-one
Heegner discriminants.  As a reproducible computational remark, the
accompanying Sage script and CSV table record an exact sweep over negative
discriminants \(|D|\le 1000\), using the resultant form
\(\Psi_D=\operatorname{Res}_j(H_D,\,j^{2}\Phi(1728/j,X))\).  In that sweep
the global minimal Fricke-cusp branch is irreducibly cubic for exactly
\[
        D=-11,-19,-23,-27,-31,-43,-67,-163,
\]
of which the fundamental discriminants are
\(-11,-19,-23,-31,-43,-67,-163\).  Here \(D=-23\) and \(D=-31\) have class
number three---the two smallest such discriminants, underlying the
Borwein class-number-three series---while \(D=-27\) is the
conductor-three order in \(\mathbb Q(\sqrt{-3})\).  These observations are
not used as a theorem in the present paper; they only indicate the
natural next stage of the CM-order classification.  A broader
classification framework for Ramanujan--Sato series on genus-zero groups
\(\Gamma_0(\ell)^+\), including \(\ell=2\), was developed by
Campbell--Cooper--Ye~\cite{CampbellCooperYe}; the present finite
class-number-one result is complementary and isolates the irreducibly cubic
Fricke companions on \(\Gamma_0(2)^+\).
\end{remark}

\section{Exact cubic data for the class-number-one companions}

For the five class-number-one cubic Fricke companions, the Fricke-cusp
parameter \(\xi_D\) is the small root selected by
\(X/z\to 4/27\).  The table records the singular moduli and the resulting
contraction.

\[
\begin{array}{c|c|c|c|c}
D & j_D & z_D=1728/j_D & |\xi_D| & -\log_{10}|\xi_D|\\
\hline
-11  & -32^3      & -27/512              & 7.6641489772\cdot 10^{-3}  & 2.1155\\
-19  & -96^3      & -1/512               & 2.8914276291\cdot 10^{-4}  & 3.5389\\
-43  & -960^3     & -1/512000            & 2.8935164254\cdot 10^{-7}  & 6.5386\\
-67  & -5280^3    & -1/85184000          & 1.7391546242\cdot 10^{-9}  & 8.7597\\
-163 & -640320^3  & -1/151931373056000   & 9.7509911987\cdot 10^{-16} & 15.011
\end{array}
\]

The primitive integral cubic polynomials for the Fricke-cusp parameters
are denoted by \(\widetilde P_D(X)\):
\[
\begin{aligned}
\widetilde P_{-11}(X)={}&290521X^3-420048X^2+531200X+4096,\\
\widetilde P_{-19}(X)={}&191850201X^3-11442384X^2+14162688X+4096,\\
\widetilde P_{-43}(X)={}&191103722496729X^3-11446268112X^2+14155782912X+4096,\\
\widetilde P_{-67}(X)={}&5289852925222272729X^3-1904373500112X^2\\
&\quad +2355167238912X+4096,\\
\widetilde P_{-163}(X)={}&16827610604518993301932059648729X^3\\
&\quad -3396577776039932112X^2
+4200598602252294912X+4096.
\end{aligned}
\]
Let \(a_D\) be the leading coefficient of \(\widetilde P_D\), and put
\[
        P_D^{\rm mon}(X)=a_D^{-1}\widetilde P_D(X).
\]
Thus \(D=-163\) is the fastest member of the fundamental
class-number-one cubic Fricke family with respect to the geometric
contraction parameter \(|\xi_D|\).

\begin{proposition}[Quadratic resolvent check]
For the five monic normalizations \(P_D^{\rm mon}(X)\), one has
\[
        \frac{\operatorname{disc}(P_D^{\rm mon})}{D}
        \in (\mathbb Q^\times)^2 .
\]
Equivalently,
\[
        \operatorname{disc}(P_D^{\rm mon})=D r_D^2,
        \qquad r_D\in\mathbb Q^\times.
\]
More precisely, writing \(r_D=\operatorname{num}(r_D)/\operatorname{den}(r_D)\) in lowest terms, the following values are obtained:
\begin{center}
\resizebox{\textwidth}{!}{%
$\begin{array}{c|r|r}
D & \operatorname{num}(r_D) & \operatorname{den}(r_D)\\
\hline
-11  & 2201485312 & 1722499009\\
-19  & 211288064 & 23085974187\\
-43  & 90898432000000 & 467483085772005891483\\
-67  & 838712123392000000 & 11554551952877987779657554693\\
-163 & 889345562545495347888128000000 & 45519895289170469819991986914645772910060302515527
\end{array}$}
\end{center}
This is consistent with the CM interpretation: the quadratic resolvent
field of the cubic is \(\mathbb Q(\sqrt D)\).  The notation here is
important: \(\widetilde P_D\) is primitive integral, whereas
\(P_D^{\rm mon}\) is the monic polynomial used for the discriminant
statement.
\end{proposition}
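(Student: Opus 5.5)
The plan is to prove the proposition by an exact discriminant computation for each of the five cubics, using a closed formula in the coefficients so that no numerical root approximation enters. For a cubic \(aX^3+bX^2+cX+d\) one has
\[
        \operatorname{disc}(aX^3+bX^2+cX+d)=b^2c^2-4ac^3-4b^3d-27a^2d^2+18abcd .
\]
Dividing by \(a^4\) gives the discriminant of the monic normalization, since the discriminant of a degree-\(n\) polynomial is homogeneous of degree \(2n-2\) in its coefficients. Thus
\[
        \operatorname{disc}(P_D^{\rm mon})=a_D^{-4}\operatorname{disc}(\widetilde P_D).
\]
Because \(a_D^{-4}\) is already a rational square, it suffices to show that \(\operatorname{disc}(\widetilde P_D)/D\) is the square of a nonzero rational number. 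Then \(r_D=\pm\sqrt{\operatorname{disc}(\widetilde P_D)/D}\,/a_D^{2}\).

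The concrete steps, in order, are as follows.
\begin{itemize}
\item For each \(D\in\{-11,-19,-43,-67,-163\}\), insert the integer coefficients of \(\widetilde P_D\), with \(d=4096\) throughout, into the formula above, evaluated exactly in \(\mathbb Z\).
\item Divide the result by \(D\). Check exact divisibility and that the quotient is positive.
\item Take an exact integer square root, for instance by factoring the quotient, or by computing \(\lfloor\sqrt{\cdot}\rfloor\) and squaring back.
\item Form \(r_D\) and reduce it to lowest terms, then compare with the tabulated \(\operatorname{num}(r_D)\) and \(\operatorname{den}(r_D)\).
\end{itemize}
The sign of \(r_D\) is immaterial to the statement, since only \(r_D^2\) enters. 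Nonvanishing of the discriminant also recertifies that the roots are distinct, as used in Proposition~\ref{prop:cm-inert-two}.

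There is a useful structural cross-check, and possibly a shortcut. The leading coefficients look like \(j\)-related cubes; for instance \(729\,(1+j_D/1728)^2\)-type factors are suggested by the bridge form \(j^2\Phi(1728/j,X)\). One can therefore compute \(\operatorname{disc}_X\bigl(j^2\Phi(1728/j,X)\bigr)\) once, symbolically as a polynomial in \(j\). This discriminant should factor as \(j^a(j-1728)\cdot(\text{square})\) times a constant. The specialization then reduces the claim to checking that \(j_D-1728\) equals \(D\) times a square, which is the classical fact \(j_D-1728=D\,m_D^2\) for class-number-one discriminants (e.g.\ \(-32768-1728=-11\cdot 56^2\)). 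The primitive clearing of denominators changes the discriminant only by squares. I would try this uniform route first and fall back to the five direct computations if the symbolic factorization contains unexpected nonsquare factors.

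The main obstacle is bookkeeping rather than concepts. The uniform route requires a clean factorization of the symbolic discriminant and tracking the content removed when passing from \(\Psi_D\) to \(\widetilde P_D\), to ensure that only square factors are discarded. The direct route requires exact big-integer arithmetic (about \(10^{90}\) in size for \(D=-163\)) and matching against the tabulated reduced fractions. Either is a finite, mechanical verification. Consistency with Lemma~\ref{lem:inert-two-orbit}, whose resolvent field is \(\mathbb Q(\sqrt D)\), serves as the conceptual confirmation.
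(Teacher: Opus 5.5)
Your fallback route (evaluate the cubic discriminant exactly, divide by $a_D^4$, take the square root, reduce, compare) is precisely how the paper obtains this proposition. The paper gives no structural argument, only the tabulated exact values certified by the ancillary scripts, with Lemma~\ref{lem:inert-two-orbit} as the conceptual explanation. Your preferred uniform route is genuinely different, and it does go through. The $X$-discriminant of $j^2\Phi(1728/j,X)$ has degree $5$ in $j$: the $j^5$ term comes only from the $-4ac^3$ term, with $a=729(j-1728)^2$ and $c$ linear in $j$. Its zeros come from three sources: the triple point $t=-256$ over $j=0$, the ramification point $t=512$ over $j=1728$, and the Fricke-paired fibre $\{t,4096/t\}$ over $j=-3375$. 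One gets
\[
\operatorname{disc}_X\bigl(j^2\Phi(1728/j,X)\bigr)=2^{50}3^{24}\,j^2(j-1728)(j+3375)^2,
\]
so the exponent of $j$ is even and the constant is a square; your reduction tacitly needs both facts. Moreover $\widetilde P_D=j_D^2\Phi(z_D,X)/1728^2$ (constant term $4096$), with leading coefficient $(j_D-1728)^2/2^{12}=D^2m_D^4/2^{12}$. This is a square rather than a cube, and it involves $j_D-1728$ rather than $1+j_D/1728$ as you guessed. The result is the closed form $r_D=\pm 2^{25}j_D(j_D+3375)/(D^4m_D^7)$, which reproduces the table, e.g.\ $r_{-11}=2^{19}\cdot 4199/(7^6\cdot 11^4)$. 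Your route buys a uniform proof with no big-integer arithmetic. It also shows that the resolvent field is $\mathbb Q(\sqrt{j_0-1728})$ for every rational $j_0\neq 0,1728,-3375$, so the only CM input is the classical fact $j_D-1728\in D(\mathbb Q^\times)^2$. That fact holds for the five $D\equiv 5\pmod 8$ but not for every class-number-one $D$; for example $j_{-8}-1728=2\cdot 56^2$, so state it only for the relevant discriminants. The paper's case-by-case computation, in turn, certifies the exact reduced numerators and denominators directly, without any symbolic factorization.
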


\section*{Appendix A: exact algebraic certificates}

This appendix records the reproducibility layer for the exact algebraic
claims.  The displayed formulas in the paper are certified in two ways:
by the explicit tables in the text, and by the accompanying source files.
The ancillary source package contains a directory
\[
        \texttt{anc/certificates/}
\]
with the following files:
\begin{center}
\small
\begin{tabular}{p{0.36\textwidth}p{0.52\textwidth}}
\toprule
file & purpose\\
\midrule
\texttt{bridge\_resultant.sage} & verifies
\(\operatorname{Res}_t(z(t+256)^3-1728t^2,\,X(t+64)^2-256t)=2^{36}\Phi(z,X)\);\\
\texttt{class\_number\_one\_}\newline\texttt{factorization.sage} & specializes \(z_D=1728/j_D\), clears denominators, and factors the \(h(D)=1\) polynomials over \(\mathbb Q\);\\
\texttt{d163\_transport\_}\newline\texttt{identity.sage} & verifies the exact squared coefficient identities for the \(D=-163\) Chudnovsky--Fricke bridge modulo the cubic polynomial for \(t\);\\
\texttt{numerical\_}\newline\texttt{verification.py} & verifies the ODE/gauge bridge, the discriminant-resolvent squares, and the signed high-precision \(D=-163\) \(1/\pi\) evaluation;\\
\texttt{independent\_}\newline\texttt{certs.py} & independent SymPy check of the discriminants and the ODE normal-form certificate;\\
\texttt{numeric\_}\newline\texttt{certs.py} & independent high-precision checks of the analytic branch, eta/Eisenstein identities, and the signed \(D=-163\) formula;\\
\texttt{output\_}\newline\texttt{certificates.txt} & sample output of the final verification run.\\
\bottomrule
\end{tabular}
\end{center}

For each of the five cubic cases
\[
        D=-11,-19,-43,-67,-163,
\]
the text gives the exact value of \(z_D\), the primitive integral cubic
\(\widetilde P_D\), the monic normalization \(P_D^{\rm mon}\), the
Fricke-cusp root \(\xi_D\) through its contraction, and the exact
quadratic-resolvent certificate
\[
        \operatorname{disc}(P_D^{\rm mon})=D r_D^2,
        \qquad r_D\in\mathbb Q^\times.
\]
For \(D=-163\) the paper additionally gives the full transported linear
form
\[
        \frac1\pi
        =
        \sqrt{163}
        \sum_{n=0}^{\infty}
        \frac{(4n)!}{256^n(n!)^4}\xi^n
        \left(\alpha+\sqrt{1-\xi}\,n\right),
\]
with \(\xi\), \(\alpha\), and the branch of \(\sqrt{1-\xi}\) specified
above.  The numerical certificate in the source package evaluates this
identity on the selected real branch and obtains residual
\[
        5.81\cdot10^{-121}.
\]
This last numerical check is used only to certify the signs of the square
roots in the exact coefficient bridge; the algebraic identities
underlying the bridge are verified exactly.

For broader CM orders one uses
\[
        \Psi_D(X)
        =
        \operatorname{Res}_j\!\left(
        H_D(j),\,j^2\Phi(1728/j,X)
        \right),
\]
where \(H_D\) is the CM class polynomial of the order of discriminant
\(D\).  The accompanying Sage script
\texttt{phase\_C1\_l2\_sweep.sage} and the CSV table
\texttt{phase\_C1\_l2\_final\_clean\_table.csv} record the computational
remark about discriminants \(|D|\le1000\).  This sweep is not used as a
main theorem in the paper.

\end{document}